\newif\ifcc\IfFileExists{cc.cls}{\cctrue}{\ccfalse}\ifcc
\def\cprime{$'$}
\newcommand{\pb}[1]{\textsc{#1}} 
\newenvironment{Problems}{\begin{list}{}{\setlength{\itemsep}{10pt}
\setlength{\topsep}{10pt}
} \it}{\end{list}}
\renewcommand{\eqref}{\ref}
\newcommand{\compc}[1]{\textsf{#1}}
\newcommand{\Card}{\textrm{Card}}
\newcommand{\cut}[1]{\overline{#1}} 
\newcommand{\Rp}{\leq_{\textit{par}}}
\newcommand{\stretchg}{\widetilde{g}}
\newcommand{\ZZ}{\mathbb{Z}}
\newcommand{\NN}{\mathbb{N}}
\renewcommand{\S}{\mathfrak{S}}
\newcommand{\CC}{\mathbb{C}}
\newcommand{\RR}{\mathbb{R}}
\newcommand{\f}[2]{{\frac {#1} {#2}}}
\newcommand{\bitlength}[1]{\langle #1 \rangle}
\newcommand{\poly}{{\mbox {poly}}}
\title{Reduced Kronecker Coefficients\\
 and\\
 counter--examples to\\
 Mulmuley's\\
strong saturation conjecture {\bf SH}\\
\footnotesize With an appendix by Ketan Mulmuley} 
\author{Emmanuel Briand\\
Departamento de \'Algebra,\\
Facultad de Matem\'aticas,\\
Aptdo. de Correos 1160,\\
41080 Sevilla, Spain.\\
\email{ebriand@us.es}
\and
Rosa Orellana\\
Dartmouth College,\\
 Mathematics Department,\\
6188 Kemeny Hall,\\
Hanover, NH 03755, USA.\\
\email{Rosa.C.Orellana@dartmouth.edu}
\and
Mercedes Rosas\\
Departamento de \'Algebra,\\
Facultad de Matem\'aticas,\\
Aptdo. de Correos 1160,\\
41080 Sevilla, Spain.\\
\email{mrosas@us.es}
}
\begin{abstract}
We provide counter--examples to Mulmuley's strong saturation conjecture ({\bf strong SH}) for the Kronecker coefficients. This conjecture was proposed in the setting of Geometric Complexity Theory to show that deciding whether or not a Kronecker coefficient is zero can be done in polynomial time. We also provide a short proof of the \#P--hardness of computing the Kronecker coefficients. Both results rely on the connections between the Kronecker coefficients and another family of structural constants in the representation theory of the symmetric groups, Murnaghan's reduced Kronecker coefficients.

An appendix by Mulmuley introduces a relaxed form of the saturation hypothesis {\bf SH}, still strong enough for the aims of Geometric Complexity Theory.
\end{abstract}
\begin{document}

\defcitealias{GCT1}{GCT1:SIAM}
\defcitealias{GCT2}{GCT2:SIAM}
\defcitealias{GCT3}{GCT3}
\defcitealias{GCT4}{GCT4}
\defcitealias{GCT6}{GCT6}
\defcitealias{GCT7}{GCT7}
\defcitealias{GCT8}{GCT8}

\section{Introduction}

A major unsolved problem in group representation theory is raised by the 
tensor product of irreducible representations of a symmetric group $\S_n$: Provide a (positive) combinatorial interpretation for the multiplicities of its decomposition into irreducible representations (the \emph{Kronecker coefficients} $g_{\mu,\nu}^{\lambda}$). Recently, the Kronecker coefficients started being examined also under the angle of computational complexity. The problems considered are:\begin{Problems}
\item \pb{Kron}: (Computation problem) Given labels $\lambda$, $\mu$, $\nu$, 
compute the Kronecker coefficient $g_{\mu,\nu}^{\lambda}$.
\item \pb{ZeroKron}: (Decision problem) Given labels $\lambda$, $\mu$ and $\nu$, decide whether $g_{\mu,\nu}^{\lambda}$ is positive or zero.
\end{Problems}
\noindent The labels $\lambda$, $\mu$, $\nu$ of the Kronecker coefficients are integer partitions (finite, nonincreasing sequences of positive integers) and the complexity of these problems is measured with respect to their bitlength.

A family of related coefficients, much better understood, are the \emph{Littlewood--Richardson coefficients} $c_{\mu,\nu}^{\lambda}$ (the analogues of the Kronecker coefficients for representations of the complex linear groups $GL_n(\CC)$). \citet{Narayanan} showed that computing the Littlewood--Richardson coefficients is a \compc{\#P}--complete problem, which implies that there is no polynomial--time algorithm to perform this task, unless $\compc{P}=\compc{NP}$. On the other hand, deciding the positivity of the Littlewood--Richardson coefficients can be done in polynomial time \citep{GCT3,Knutson:Tao:Notices,DeLoera:McAllister}, as a consequence of the \emph{saturation property} established by \citet{Knutson:Tao:1}:
\begin{equation}\label{eq:satLR}
c_{\mu,\nu}^{\lambda} >0  \qquad \Leftrightarrow \qquad c_{N\mu,N\nu}^{N\lambda}>0 \; \textit{ for all positive integers $N$,}
\end{equation}
where $N \lambda$ is just the partition $(N \lambda_1,N \lambda_2, \ldots, N\lambda_k)$ if $\lambda=(\lambda_1,\lambda_2, \ldots, \lambda_k)$, and likewise for $N \mu$ and $N \nu$.

The problem of determining whether or not \pb{Kron} is also in \compc{\#P} is open \citep[Question 2.1 in][]{GCT0}. \citet{Burgisser:Ikenmeyer} showed  that \pb{Kron} is \compc{\#P}--hard, and \compc{GapP}--complete.

That \pb{ZeroKron} is in \compc{P} is an open conjecture that lies at the heart of a detailed plan, \emph{Geometric Complexity Theory}, that Ketan Mulmuley and Milind Sohoni elaborated to prove that $\compc{P} \neq \compc{NP}$ over the complex numbers: an arithmetic, non--uniform version of $\compc{P} \neq \compc{NP}$ \citep[See the series of papers][referred to as GCT1--8 below]{GCT1:SIAM,GCT2:SIAM,GCT3,GCT4,GCT5,GCT6,GCT7,GCT8}. Indeed, Mulmuley proposed in \citetalias{GCT6} a  variant of the strategy used to show that the positivity of the Littlewood--Richardson coefficients can be decided in polynomial time, that would imply that \pb{ZeroKron} is also in $\compc{P}$. 

Mulmuley conjectured in \citetalias{GCT6} (versions 1--3) that the following  two hypotheses hold: 

(i) A \emph{positivity hypothesis} {\bf PH1}, that says that the Kronecker coefficients $g_{N\mu,N\nu}^{N\lambda}$ for $N \in \NN^*$ count the integral points in the dilations $N \mathcal{P}$ of a polytope $\mathcal{P}=\mathcal{P}(\lambda,\mu,\nu)$ whose non--emptiness can be decided in polynomial time. (Note  that {\bf PH1}  implies that \pb{Kron} is in \compc{\#P}).

(ii) A \emph{saturation hypothesis} {\bf SH}, that reduces the test that $\mathcal{P}(\lambda,\mu,\nu)$ contains an integral point (integer programming) to a test of non--emptiness (linear programming). Since it is known that the verbatim translation of the saturation property of the Littlewood--Richardson coefficients \eqref{eq:satLR} does not  hold for the Kronecker coefficients, Mulmuley proposed a variant of it, see \ref{sec:preliminaries},  and conjectured that it holds for the Kronecker coefficients ({\bf SH}). 

Together, hypotheses {\bf SH} and {\bf PH1} imply that \pb{ZeroKron} is in \compc{P} \citepalias[See Theorem 1.4.1 in][]{GCT6}.

The main result of this paper shows that the situation is more complicated than expected: {\bf SH}, as formulated in \citetalias[][versions 1--3]{GCT6}, does not hold for the Kronecker coefficients.  \ref{thm:main}
 provides infinitely many counter--examples. They belong to the family of the Kronecker coefficients $g_{\mu,\nu}^{\lambda}$ where $\mu$ and $\nu$ have at most $2$ parts and $\lambda$ has at most $3$ parts. They
are, actually, \emph{all} the counter--examples in this family (see \ref{subsec:get cex}).  

After a first version of this paper was published on the preprint server ArXiv,
\citet{Mulmuley:erratum} proposed a new, weaker saturation conjecture {\bf SH}, still strong enough for the aims of Geometric Complexity Theory. This correction is appended to the present paper. In what follows, we will refer to the disproved saturation hypothesis, stated in \citetalias[][versions 1--3]{GCT6}, as the \emph{strong saturation hypothesis, {\bf strong SH}}.

This work is organized as follows.
\ref{thm:main} is formally proved in \ref{sec:proof main} after some preliminaries (\ref{sec:preliminaries}). In \ref{sec:explain} we explain how the counter--examples were obtained: we were able to check exhaustively {\bf strong SH} 
for all Kronecker coefficients indexed by two two--row shapes (the Kronecker coefficients $g_{\mu,\nu}^{\lambda}$ where $\mu$ and $\nu$ have at most two terms) thanks to new explicit formulas for them. Previously known formulas,  due to \citet{Remmel:Whitehead} and \citet{Rosas:2001} were not suitable for this study. The new formulas were obtained by considering another family of structural constants related to representations of the symmetric groups introduced long time ago by \citet{Murnaghan:1938} and called \emph{Reduced Kronecker coefficients} by \citet{Klyachko}. They are defined precisely in \ref{subsec:reduced}. In \ref{sec:relevance} we point out the possible relevance of the reduced Kronecker coefficients in the complexity issues about Kronecker coefficients. As an illustration,  a very short and simple proof of the \compc{\#P}--hardness of \pb{Kron} is presented. 
The paper ends with Mulmuley's appendix proposing a relaxed version of the hypothesis {\bf strong SH}.


\section{Preliminaries and Main Result}\label{sec:preliminaries}
\subsection{Preliminaries on Kronecker coefficients.}\label{subsec:rep}

A \emph{partition} is a finite nonincreasing sequence of positive integers $\lambda=(\lambda_1,\lambda_2,\ldots,\lambda_k)$. We allow ourselves, when convenient, to represent also $\lambda$ by the sequences obtained by appending trailing zeros:  $\lambda = (\lambda_1,\lambda_2,\ldots,\lambda_k,0,\ldots,0)$. The (non--zero) terms $\lambda_i$ of $\lambda$ are usually called its \emph{parts}. The \emph{length} (number of parts) $k$ is denoted with $\ell(\lambda)$. The sum of the parts of $\lambda$ is called \emph{weight of $\lambda$} and denoted with $|\lambda|$.

For $N$ and $d$ positive integers, the partition $(N,N,\ldots,N)$ having $d$ parts all equal to $N$ is denoted with $(N^d)$. Partitions can be added: $\lambda+\mu=(\lambda_1+\mu_1,\lambda_2+\mu_2, \ldots)$ and stretched: for $N$ positive integer, $N\lambda=(N \lambda_1, N \lambda_2,\ldots)$. 

The irreducible (finite--dimensional, complex) representations $V_{\lambda}(\S_n)$ of the symmetric group $\S_n$ are indexed by the partitions $\lambda$ of weight $n$.  Given two irreducible representations $V_{\mu}(\S_n)$ and $V_{\nu}(\S_n)$, one can form their tensor product and decompose it into irreducible representations. Such a  decomposition takes the form:
\[
V_{\mu}(\S_n) \otimes V_{\nu}(\S_n) \cong \bigoplus_{\lambda\text{ s.t. } |\lambda|=n} g_{\mu,\nu}^{\lambda} V_{\lambda}(\S_n)
\] 
where $g_{\mu,\nu}^{\lambda} V_{\lambda}(\S_n)$ means ``the direct sum of $g_{\mu,\nu}^{\lambda}$ copies of $V_{\lambda}(\S_n)$''. Each multiplicity $g_{\mu,\nu}^{\lambda}$ is uniquely determined by $\lambda$, $\mu$, $\nu$. By considering all symmetric groups $\S_n$ this defines an infinite family of integers $g_{\mu,\nu}^{\lambda}$ indexed by triples of partitions fulfilling $|\lambda|=|\mu|=|\nu|$. They are the Kronecker coefficients.

The Kronecker coefficients can also be interpreted in the setting of representations of the general linear group (this is, actually, the relevant interpretation in Geometric Complexity Theory). The irreducible (finite--dimensional, polynomial) representations $V_{\lambda}(GL_k(\CC))$ of $GL_k(\CC)$ are indexed by the partitions $\lambda$ with length at most $k$. The irreducible representations of the Cartesian product $GL_m(\CC) \times GL_n(\CC)$ are just the tensor products $V_{\mu}(GL_m(\CC)) \otimes V_{\nu}(GL_n(\CC))$. The Kronecker coefficient $g_{\mu,\nu}^{\lambda}$ is the multiplicity of $V_{\mu}(GL_m(\CC)) \otimes V_{\nu}(GL_n(\CC))$ in $V_{\lambda}(GL_{mn}(\CC))$, seen as a representation of  $GL_m(\CC) \times GL_n(\CC)$ through the Kronecker product of matrices. 
%
This follows from Schur--Weyl duality \citep[see][]{Kraft:Procesi}. A simple consequence is that $g_{\mu,\nu}^{\lambda} $ can be positive only if $\ell(\lambda) \leq \ell(\mu) \ell(\nu)$. Another consequence is provided by the following lemma. 
\begin{lemma}
Let $\mu$, $\nu$, $\lambda$ be partitions with $\ell(\mu) \leq m$, $\ell(\nu) \leq n$ and $\ell(\lambda) \leq mn$. Below we write $\lambda$ as a sequence of exactly $mn$ terms, by appending trailing zeros if necessary: $\lambda=(\lambda_1,\lambda_2,\ldots, \lambda_{mn-1},\lambda_{mn})$,
 and likewise for $\mu$ ($m$ terms) and $\nu$ ($n$ terms). Set $k=\lambda_{mn}$, $k_1=m k$ and $k_2=n k$. If $\mu_m < k_2$ or $\nu_n < k_1$ then $g_{\mu,\nu}^{\lambda}=0$. Else, 
\begin{equation}\label{eq:simplification}
g_{\mu,\nu}^{\lambda}=
g_{(\mu_1-k_2,\mu_2-k_2,\ldots,\mu_m-k_2)(\nu_1-k_1,\nu_2-k_1,\ldots,\nu_n-k_1)}^{(\lambda_1-k,\lambda_2-k,\ldots,\lambda_{mn-1}-k,0)}.
\end{equation}
\end{lemma}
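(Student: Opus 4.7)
The plan is to use the description of $g_{\mu,\nu}^{\lambda}$ as the multiplicity of $V_{\mu}(GL_m(\CC))\otimes V_{\nu}(GL_n(\CC))$ in $V_{\lambda}(GL_{mn}(\CC))$ pulled back along the Kronecker embedding $\iota\colon(A,B)\mapsto A\otimes B$, and to twist all three factors by a suitable one--dimensional character chosen so that its effect on the highest--weight labels is precisely the subtraction of $(k^{mn})$, $(k_2^m)$ and $(k_1^n)$ appearing in \eqref{eq:simplification}.

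The key identity I would exploit is $\det(A\otimes B)=\det(A)^n\det(B)^m$, so that the character $\det_{mn}^{k}$ restricts along $\iota$ to $\det_m^{nk}\otimes\det_n^{mk}=\det_m^{k_2}\otimes\det_n^{k_1}$. Tensoring the target with $\det_{mn}^{-k}$ and the source with its restriction does not change the multiplicity space. Passing from polynomial to rational highest--weight labels, this twisting corresponds, respectively, to the substitutions $\lambda\mapsto\lambda-(k^{mn})$, $\mu\mapsto\mu-(k_2^m)$, $\nu\mapsto\nu-(k_1^n)$.

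I would then split into cases. When $\mu_m\geq k_2$ and $\nu_n\geq k_1$, the three shifted labels are honest partitions; since $\lambda_{mn}=k$, the shifted $\lambda$ has last part $0$, and the equality of the two multiplicities is precisely \eqref{eq:simplification}. When $\mu_m<k_2$ (the case $\nu_n<k_1$ is symmetric), the shifted $\mu$ acquires a strictly negative last entry, so $V_{\mu-(k_2^m)}(GL_m(\CC))$ is an irreducible rational, non--polynomial representation. On the other hand $V_{\lambda-(k^{mn})}(GL_{mn}(\CC))$ is polynomial, and because $\iota$ is itself polynomial in the matrix entries, its restriction to $GL_m(\CC)\times GL_n(\CC)$ remains polynomial; hence it can only contain polynomial irreducibles. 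Therefore the multiplicity of $V_{\mu-(k_2^m)}\otimes V_{\nu-(k_1^n)}$ vanishes, which by the twist equals $g_{\mu,\nu}^{\lambda}$.

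The only delicate point is the polynomial--versus--rational bookkeeping: the twist temporarily takes one out of the polynomial category, and this is exactly what simultaneously forces the vanishing in the boundary cases and produces the shifted equality in the generic one.
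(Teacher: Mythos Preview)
Your argument is correct and follows essentially the same route as the paper: both use the $GL$-interpretation of the Kronecker coefficient together with the identity $\det(A\otimes B)=\det(A)^{n}\det(B)^{m}$ to obtain the invariance $g_{\mu+(n^m),\nu+(m^n)}^{\lambda+(1^{mn})}=g_{\mu,\nu}^{\lambda}$, applied $k$ times. Your treatment of the vanishing case via the polynomial-versus-rational distinction is a clean way to justify what the paper dismisses with ``we easily see,'' but it is the same mechanism.
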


\begin{proof}
Let $m$, $n$ be positive integers and
let $\lambda$ be a partition with length at most $mn$. As representations of $GL(\CC^m) \times
GL(\CC^n)$:
\[
V_{\lambda}\left(GL(\CC^m \otimes \CC^n)\right)
\cong
\mathop{\bigoplus_{\ell(\mu) \leq m}}_{\ell(\nu) \leq n}
g^{\lambda}_{\mu,\nu} \, V_{\mu}\left(GL(\CC^m)\right)\otimes V_{\nu}\left(GL(\CC^n)\right).
\]
For any vector space $U$ of dimension $d$, let $\det U=V_{(1^d)}\left(GL(U)\right)$ be
its \emph{determinant representation} (remember that $(1^d)$ stands for the partition $(1,1,\ldots,1)$ with $d$ parts, all equal to $1$). That is, the one dimensional
representation of $GL(U)$ such that $g(u)=\det(g) u$ for all $g \in
GL(U)$. We have $V_{\lambda+(1^{d})}\left(GL(U)\right) \cong \det(U) \otimes V_{\lambda}\left(GL(U)\right)$. 
This fact, together with the isomorphism:
\[
\det(\CC^m \otimes \CC^n) \cong
\det(\CC^m)^{\otimes n} \otimes \det(\CC^n)^{\otimes m},
\]
implies the following decomposition:
\[
V_{\lambda+(1^{mn})}\left(GL(\CC^m \otimes \CC^n)\right)
\cong
\mathop{\bigoplus_{\ell(\mu) \leq m}}_{\ell(\nu) \leq n}
g^{\lambda}_{\mu,\nu} \, V_{\mu+(n^m)}\left(GL(\CC^m)\right)\otimes V_{\nu+(m^n)}\left(GL(\CC^n)\right).
\]
From this we obtain the invariance relation:
\[
g_{\mu+(n^m),\nu+(m^n)}^{\lambda+(1^{mn})}
=g_{\mu,\nu}^{\lambda}.
\]
Moreover, we easily see
that $g_{\mu\nu}^{\lambda}$ can be positive only when $\mu_m \geq n
\lambda_{mn}=k_2$ and $\nu_n \geq m \lambda_{mn}=k_1$.
\end{proof}

The simplest non--trivial, meaningful family of Kronecker coefficients $g_{\mu,\nu}^{\lambda}$ closed under stretching (that is, under the map that sends the triple  $\lambda$, $\mu$, $\nu$ into $N \lambda$, $N\mu$, $N\nu$) is the family of coefficients $g_{(\mu_1,\mu_2)(\nu_1,\nu_2)}^{(\lambda_1,\lambda_2,\lambda_3,\lambda_4)}$. But in view of \eqref{eq:simplification} its study can be reduced to the study of the coefficients $g_{(\mu_1,\mu_2)(\nu_1,\nu_2)}^{(\lambda_1,\lambda_2,\lambda_3)}$.


\subsection{Reduced Kronecker Coefficients}\label{subsec:reduced}

Let us introduce another family of constants, closely related to the Kronecker coefficients, which will play an important role in this paper.

For any partition $\lambda=(\lambda_1,\lambda_2,\ldots,\lambda_k)$ and integer $n$, denote with $(n-|\lambda|,\lambda)$ the sequence obtained by prepending $n-|\lambda|$ to $\lambda$, \emph{i.e.} $(n-|\lambda|,\lambda)=(n-|\lambda|,\lambda_1,\lambda_2,\ldots,\lambda_k)$. This is a partition when $n \geq |\lambda|+\lambda_1$. \citet{Murnaghan:1938} established that for any three partitions $\alpha$, $\beta$, $\gamma$, the sequence with general term $g_{(n-|\alpha|,\alpha)(n-|\beta|,\beta)}^{(n-|\gamma|,\gamma)}$ is stationary. Let $\overline{g}_{\alpha,\beta}^{\gamma}$ be its limit. Following \citet{Klyachko} we call these constants $\overline{g}_{\alpha,\beta}^{\gamma}$ the \emph{reduced Kronecker coefficients}. 

Upper bounds for the index when the sequences with general term 
\[
g_{(n-|\alpha|,\alpha)(n-|\beta|,\beta)}^{(n-|\gamma|,\gamma)}
\]
become stationary are discussed by \citet{Brion:Foulkes,Vallejo,Briand:Orellana:Rosas:stability}.


\subsection{Saturation Hypotheses and counter--examples to the hypothesis {\bf strong SH}}

The verbatim translation of the saturation property \eqref{eq:satLR} that holds for the Littlewood--Richardson coefficients is known not to hold for the Kronecker coefficients. The simplest counter--example may be $g_{(1,1)(1,1)}^{(1,1)}=0$ but  $g_{(2,2)(2,2)}^{(2,2)}=1$.  Indeed, 
\[
g_{(N,N),(N,N)}^{(N,N)}=
\left\lbrace
\begin{matrix}
0 & \textit{ for odd $N$,}\\
1 & \textit{ for even $N$.}
\end{matrix}
\right.
\]
Many more such counter--examples exist.

To present an alternative saturation property we consider the \emph{stretching function} $\stretchg_{\mu,\nu}^{\lambda}$ (where $\mu$, $\nu$, $\lambda$ are partitions of the same weight) defined by:
\[
\stretchg_{\mu,\nu}^{\lambda}: 
\quad N\in \NN^* \longmapsto g_{N\mu,N\nu}^{N\lambda}.
\]
Mulmuley proved \citepalias[][Theorem 1.6.1.b]{GCT6} that $\stretchg_{\mu,\nu}^{\lambda}$ is always \emph{a quasipolynomial}, \emph{i.e.}  a function of the form:
\begin{equation}\label{eq:quasipol}
F: \quad N \in \NN^* \longmapsto \left\lbrace
\begin{matrix}
F_1(N) & \text{ if $N \equiv 1 \mod k$,} \\
F_2(N) & \text{ if $N \equiv 2 \mod k$,} \\
\vdots & \vdots \\
F_k(N) & \text{ if $N \equiv k \mod k$} 
\end{matrix}
\right.
\end{equation}
where $k$ is a positive integer and $F_1$, $F_2$, \ldots, $F_k$ are polynomials \citep[see][Section 4.4 as a reference on quasipolynomials]{Stanley:vol1}. A quasipolynomial is said to be \emph{saturated} \citepalias[][def. 1.2.4]{GCT6} when it has the property: 
\[
F(1) = 0 \Rightarrow F_1 \textit{ vanishes identically.}
\]
We are now ready to state Mulmuley's strong saturation hypothesis  {\bf strong SH} for the Kronecker coefficients \citepalias[][versions 1--3]{GCT6}:

\begin{Problems}
\item {\bf Strong SH:} {\itshape The stretching quasipolynomials $\stretchg_{\mu,\nu}^{\lambda}$ of the Kronecker coefficients are saturated.}
\end{Problems}

Observe that {\bf Strong SH} implies that for any three partitions $\lambda$, $\mu$, $\nu$ of the same weight,
\[
g_{\mu,\nu}^{\lambda}=0 \Rightarrow g_{N\mu,N\nu}^{N\lambda}=0 \;\textit{ for infinitely many positive integers $N$}.
\]
Hence, the following theorem provides an infinite family of counter--examples to {\bf Strong SH}. (Its proof is provided in the next section.)
\begin{theorem}\label{thm:main}
Let $i$, $j$, $k$ be integers such that $i>j>0$ and $k > 2i+j$. 
Let 
\[
\alpha=(k,k),\qquad 
\beta=(k+1,k-1),\qquad
\gamma=(2k-2i-2j,2i,2j)
\]
Then:
\begin{equation}\label{cex}
g_{N\alpha,N\beta}^{N\gamma}=
N/2+
\left\lbrace
\begin{matrix}
1 & \textit{ for even $N$,}\\
-1/2 & \textit{ for odd $N$.}
\end{matrix}
\right.
\end{equation}
In particular, $g_{\alpha,\beta}^{\gamma}=0$ and $g_{N\alpha,N\beta}^{N\gamma}>0$ for all $N>1$.
\end{theorem}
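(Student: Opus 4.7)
The plan is to derive an explicit quasipolynomial formula for $\stretchg_{\alpha,\beta}^{\gamma}(N) = g_{N\alpha,N\beta}^{N\gamma}$ and read off \eqref{cex}. First I would apply the Jacobi--Trudi identity $s_{(c,d)} = h_c h_d - h_{c+1}h_{d-1}$ to each of the two--row Schur functions $s_{N\alpha}$ and $s_{N\beta}$. Under the Frobenius characteristic, which interchanges the pointwise product of characters with the internal product $*$ on symmetric functions, this expresses $s_{N\alpha} * s_{N\beta}$ as a signed sum of four internal products $(h_{Nk-a} h_{Nk+a}) * (h_{Nk-N-b} h_{Nk+N+b})$ with $a, b \in \{0,1\}$ and sign $(-1)^{a+b}$. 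Each factor $h_s h_{n-s}$ (with $n = 2Nk$) is the Frobenius image of the permutation character of $\mathfrak{S}_n$ on $s$--subsets of $\{1,\ldots,n\}$, and the internal product of two such characters decomposes via the orbits of $\mathfrak{S}_n$ on pairs of subsets $(A, B)$, parameterised by the intersection size $r = |A \cap B|$:
\[
(h_s h_{n-s}) * (h_t h_{n-t}) \;=\; \sum_{r \geq 0} h_r \, h_{s-r} \, h_{t-r} \, h_{n-s-t+r}.
\]
Pairing with $s_{N\gamma}$ and reading off coefficients as Kostka numbers yields the working formula
\[
\stretchg_{\alpha,\beta}^{\gamma}(N) \;=\; \sum_{a, b \in \{0,1\}} (-1)^{a+b} \sum_{r \geq 0} K_{N\gamma,\, (r,\, Nk-a-r,\, Nk-N-b-r,\, N+a+b+r)}.
\]

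Next, since $N\gamma = (2Nk-2Ni-2Nj, 2Ni, 2Nj)$ has only three rows, a semistandard Young tableau of shape $N\gamma$ with entries in $\{1,2,3,4\}$ is fully determined by the number $u$ of $3$'s in the bottom row together with the numbers of $2$'s and $3$'s in the middle row (the top row is then read off from the content); the semistandard conditions translate into a system of linear inequalities in these statistics and the content. Each Kostka number thus becomes a count of integer points in a polytope whose facets are linear in the content parameters. The hypothesis $k > 2i + j$ guarantees that $N\gamma$ is strictly three--rowed and these polytopes have their generic combinatorial structure, with no accidental degenerations.

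The crucial step is then the inclusion--exclusion over the four contributions $(a,b) \in \{0,1\}^2$. The higher--dimensional bulk counts cancel pairwise, leaving a one--dimensional count of integer points in an interval whose length grows linearly in $N$ and whose endpoints are integers when $N$ is even but half--integers when $N$ is odd. Performing this count directly gives $N/2 + 1$ points in the even case and $(N-1)/2$ in the odd case, which is \eqref{cex}. The two remaining assertions are then immediate: setting $N = 1$ in the formula gives $(1-1)/2 = 0$, so $g_{\alpha,\beta}^{\gamma} = 0$; and for $N \geq 2$ one has $N/2 + 1 \geq 2$ (even $N$) or $(N-1)/2 \geq 1$ (odd $N$), so $g_{N\alpha,N\beta}^{N\gamma} > 0$. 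The principal obstacle is the combinatorial bookkeeping in the inclusion--exclusion: tracking which lattice--point contributions survive, isolating the effective one--dimensional parameter in which the surviving count is expressed, and verifying that the endpoint of the surviving interval shifts by $1/2$ with the parity of $N$ --- this shift is what produces the period--$2$ quasipolynomial behaviour.
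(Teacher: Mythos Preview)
Your setup is sound: the Jacobi--Trudi expansion, the internal-product identity $(h_s h_{n-s})*(h_t h_{n-t})=\sum_r h_r h_{s-r} h_{t-r} h_{n-s-t+r}$, and the resulting expression as a signed sum of Kostka numbers are all correct, and your parametrisation of SSYT of three-row shape with entries in $\{1,2,3,4\}$ is fine. In principle this route leads to the same answer.

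But it is a genuinely different route from the paper's, and it is longer. The paper does not build the machinery from scratch; it quotes Rosas's closed formula (\ref{lemma:rosas}), which already packages your entire signed Kostka sum into a \emph{two}-dimensional lattice count $\Card(\mathcal{R}_+\cap\mathcal{Z}\cap\mathcal{L})-\Card(\mathcal{R}_-\cap\mathcal{Z}\cap\mathcal{L})$. The paper's one new idea is then purely geometric: the reflection $(x,y)\mapsto(x,2\mu_2+2-y)$ preserves $\mathcal{Z}$ and $\mathcal{L}$ and sends $\mathcal{R}_-$ to a rectangle $\mathcal{R}''_-$ that, for the specific $\alpha,\beta,\gamma$ and all $N$, is a vertical translate of $\mathcal{R}_+$ by exactly one unit. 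So the difference collapses immediately to a difference of counts on two horizontal segments, and the parity of $N$ enters only through which of their endpoints lie in the sublattice $\mathcal{L}$.

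Your proposal has a gap precisely at the step the paper's reflection handles in one line. You assert that ``higher-dimensional bulk counts cancel pairwise, leaving a one-dimensional count'', but you start from an effectively four-dimensional count (over $r,u,v,w$), and the four $(a,b)$-shifts move different facets of the polytope by $1$. Nothing in your outline identifies \emph{which} contributions pair up or why the residual is one-dimensional rather than, say, two-dimensional; nor is there any mechanism explaining why the endpoint of the surviving interval moves by exactly $1/2$ with the parity of $N$. Carrying this out would amount to re-deriving Rosas's formula and then discovering an analogue of the reflection trick inside your coordinates. That is doable, but it is the whole proof, not bookkeeping --- so as it stands the proposal is a correct strategy with its central step missing.
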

The smallest counter--example in this family is $\stretchg_{(6,6)(7,5)}^{(6,4,2)}$.

\begin{remark}\label{rem:codim} \emph{All} counter--examples $\stretchg^{\lambda}_{\mu,\nu}$ to {\bf Strong SH}  with $\ell(\mu) \leq 2$, $\ell(\nu)\leq 2,$ and $\ell(\lambda) \leq 3$ 
are given by  \ref{thm:main}, up to permutation of $\mu$ and $\nu$. This follows from the calculations in \citet{Briand:Orellana:Rosas:FPSAC,Briand:Orellana:Rosas:Chamber} reported in \ref{subsec:get cex}.
All triples $(\mu,\nu,\lambda)$ corresponding to these counter--examples are contained in a codimension $2$ affine subspace of the space of parameters, namely the subspace defined by $\mu_2=\nu_2+1$, $\mu_1=\mu_2$. 
In addition, they  shall fulfill congruences: $\lambda_2 \equiv \lambda_3 \equiv 0 \mod 2$.
Under these conditions, it is not surprising that these counter--examples escaped the sampling taken in \citetalias{GCT6}, Section 6.2, as an experimental verification of {\bf Strong SH}.
\end{remark}

\begin{remark}
Strikingly, the family of Kronecker coefficients $g_{\mu,\nu}^{\lambda}$ with $\mu=(k,k)$, which comprises the counter--examples of \ref{thm:main}, 
 has been independently studied in the recent preprint by \citet{Brown:VanWilli:Zabrocki} motivated by questions in mathematical physics.
\end{remark}


\subsection{The Positivity Hypothesis {\bf PH2}, and counter--examples}\label{subsec:PH2}

\hyphenation{quasi-poly-no-mial}

A quasipolynomial $F$  is said to be \emph{positive} when the coefficients of all polynomials $F_i$ in \eqref{eq:quasipol} are nonnegative  \citepalias[][def. 1.2.2]{GCT6}.  \citet{GCT6}  conjectured:

\begin{Problems}
\item {\bf Strong PH2:} {\itshape The stretching quasipolynomials $\stretchg_{\mu,\nu}^{\lambda}$ of the Kronecker coefficients are positive.}
\end{Problems}

This conjecture was first proposed for the Littlewood--Richardson coefficients by \citet{King:Tollu:Toumazet} 
and later for the generalized Littlewood--Richardson coefficients associated to the classical Lie groups of type $B$, $C$ and $D$ by \citet{DeLoera:McAllister}. The conjecture is still open for both cases.

Note that {\bf Strong PH2} implies straightforwardly {\bf Strong SH}. The counter--examples to {\bf Strong SH} provided by \ref{thm:main} are thus also counter--examples to {\bf Strong PH2} for Kronecker coefficients. The following is a counter--example to {\bf Strong PH2} that nevertheless fulfills {\bf Strong SH}:
\[
g^{(10N,6N,2N)}_{(10N,8N)(11N,7N)}=
7/4 \; N^2 + 3/2\; N +
\left\lbrace
\begin{matrix}
1    &\textit{ for even $N$,}\\
-1/4 &\textit{ for odd $N$.}
\end{matrix}
\right.
\]
There exist many more counter--examples to {\bf Strong PH2} than to {\bf Strong SH}. Indeed, there are several full--dimensional rational convex polyhedral cones $\sigma_i$ of $\RR^5$ such that 
for all $(\lambda_1,\lambda_2,\lambda_3,\mu_2,\mu_3) \in \sigma_i \cap \ZZ^5$ fulfilling $\lambda_1 \equiv \lambda_2 \equiv \lambda_3 \equiv \mu_2+\nu_2+1 \equiv 0 \mod 2$, the quasipolynomial $\stretchg_{(\mu_1,\mu_2)(\nu_1,\nu_2)}^{(\lambda_1,\lambda_2,\lambda_3)}$ is not positive. (See \ref{subsec:get cex}.)

\begin{remark} Although the Littlewood--Richardson coefficients are particular cases of Kronecker coefficients (see \ref{sec:relevance}), no counter--example to {\bf Strong PH2} for the Littlewood--Richardson coefficients was obtained this way.
\end{remark}

\subsection{Relaxed saturation hypothesis SH and relaxed positivity hypothesis PH2}\label{subsec:relaxed}

After a first version of this paper was released on the preprint server ArXiv, \citet{Mulmuley:erratum} 
gave new, weaker versions of the saturation hypothesis {\bf SH} and of the positivity hypothesis {\bf PH2}. These hypotheses are still strong enough for the aims of Geometric Complexity Theory. Mulmuley's note \citep{Mulmuley:erratum} is appended to this paper. 

The relaxed hypotheses {\bf PH2} and {\bf SH} hold for the Kronecker coefficients indexed by two two--rows shapes. In particular, {\bf SH} a) follows from \ref{rem:codim} in this case. For this family of Kronecker coefficients, the positivity index (see the appendix) is at most 1 (see \ref{rem:positivity index}).


\section{Proof of \ref{thm:main}}\label{sec:proof main}

The proof of \ref{thm:main} will rely on the following lemma, which is a slight simplification of \citealp[Theorem 1 in][]{Rosas:2001}.
\begin{lemma}\label{lemma:rosas}
Let $n$ be a natural integer. Let $\lambda$, $\mu$, $\nu$ be three partitions of $n$. Suppose that $\mu$ and $\nu$ have at most two parts and $\lambda$ has at most three parts. Suppose additionally that $\mu_2 \geq \nu_2$. 
Then:
\begin{equation}\label{frosas}
g_{\mu,\nu}^{\lambda}=
\Card \left(\mathcal{R}_+ \cap \mathcal{Z} \cap \mathcal{L}\right)
-
\Card \left(\mathcal{R}_- \cap \mathcal{Z} \cap \mathcal{L}\right)
\end{equation}
where:
\begin{eqnarray*}
\mathcal{Z}&=&\left\{(x,y) \in \RR^2 \,|\, x+y \leq \mu_2+\nu_2+1 
\text{ and } y-x \geq \mu_2-\nu_2+1
\right\},\\
\mathcal{L}&=&\left\{(x,y) \in \ZZ^2 \,|\, x+y \equiv \mu_2+\nu_2+1 \mod 2 \right\},
\end{eqnarray*}
and $\mathcal{R}_+$ and $\mathcal{R}_-$ are the following two rectangles of $\RR^2$:
\begin{eqnarray*}
\mathcal{R}_+&=&[\lambda_3;\lambda_2] \times [1+\lambda_2;1+\lambda_2+\lambda_3],\\
\mathcal{R}_-&=&[\lambda_3;\lambda_2] \times [2+\lambda_1;2+\lambda_1+\lambda_3].
\end{eqnarray*}
\end{lemma}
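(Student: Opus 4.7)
The plan is to derive the formula by expanding $s_\mu$ and $s_\nu$ via Jacobi--Trudi and computing the Kronecker product inside the Hopf algebra of symmetric functions. Writing $s_{(a,b)} = h_a h_b - h_{a+1} h_{b-1}$ for each of $\mu$ and $\nu$, one gets
$$s_\mu * s_\nu = \sum_{\epsilon,\delta \in \{0,1\}} (-1)^{\epsilon+\delta}\,(h_{\mu_1+\epsilon}\,h_{\mu_2-\epsilon}) * (h_{\nu_1+\delta}\,h_{\nu_2-\delta}).$$
Each of the four terms is computed using the coproduct identity $(fg)*h = \sum (f * h_{(1)})(g * h_{(2)})$ together with the fact that $h_k * s_\rho = s_\rho$ when $|\rho|=k$; this reduces every $(h_a h_b)*(h_c h_d)$ to a positive combination of ordinary products $s_{(\alpha_1,\alpha_2)}\,s_{(\beta_1,\beta_2)}$ of two-row Schur functions, which in turn Pieri's rule expands into Schur functions indexed by partitions of length at most three.

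Next, I would extract the coefficient of $s_\lambda$ for $\lambda=(\lambda_1,\lambda_2,\lambda_3)$. In each of the four $(\epsilon,\delta)$-contributions, the coefficient becomes a count of pairs $(x,y)\in\ZZ^2$ (which parametrize the Pieri summations) subject to linear inequalities encoding that a certain candidate sequence of row lengths is a genuine partition equal to $\lambda$. Because the total number of boxes added in the two Pieri steps is fixed, the indices $(x,y)$ always lie in a fixed parity class modulo $2$; this is where the lattice $\mathcal{L}$, defined by $x+y\equiv \mu_2+\nu_2+1 \pmod 2$, comes from. The region $\mathcal{Z}$ arises as the intersection of the partition inequalities that are common to all four contributions, with the inequality $y-x\geq \mu_2-\nu_2+1$ recording the specific asymmetry introduced by the hypothesis $\mu_2\geq\nu_2$ (which breaks the natural $\mu\leftrightarrow\nu$ symmetry and avoids double counting).

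The core of the argument is then a cancellation analysis: after rewriting all four signed contributions in the common $(x,y)$-coordinate system, the contributions with $(\epsilon,\delta)\in\{(0,1),(1,1)\}$ should telescope against portions of the $(0,0)$ and $(1,0)$ contributions, and the two contributions that survive localize on two rectangles of identical width $[\lambda_3,\lambda_2]$. The $(0,0)$-term survives on $\mathcal{R}_+ = [\lambda_3,\lambda_2]\times[\lambda_2+1,\lambda_2+\lambda_3+1]$, corresponding to the natural range for the second row of the candidate partition sitting between $\lambda_2$ and $\lambda_3$; the $(1,0)$-term survives on $\mathcal{R}_- = [\lambda_3,\lambda_2]\times[\lambda_1+2,\lambda_1+\lambda_3+2]$, the corresponding range when the Jacobi--Trudi correction forces the first row to be at least $\lambda_1+1$.

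The main obstacle is the bookkeeping: carrying four signed sums, each with its own ranges from Pieri's rule, through the coordinate change and showing that everything collapses to precisely the stated pair of rectangles with no residual terms. A clean way to organize this is to first prove an intermediate formula expressing $g_{\mu,\nu}^\lambda$ as an alternating sum of counts over four (overlapping) rectangles indexed by $(\epsilon,\delta)$, then use a single involution on $\mathcal{Z}\cap\mathcal{L}$ (exchanging $(x,y)\leftrightarrow(y-1,x+1)$, say) to match points of opposite sign outside $\mathcal{R}_+\cup\mathcal{R}_-$. Alternatively, since the statement is announced as a slight simplification of Rosas's Theorem 1 in \citet{Rosas:2001}, one may bypass the direct derivation and instead verify that the formula as stated agrees with her expression by matching the two lattice-point counts region by region.
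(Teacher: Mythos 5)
Your primary route --- rederiving the formula from scratch via Jacobi--Trudi, the coproduct identity $(fg)*h=\sum (f*h_{(1)})(g*h_{(2)})$ and Pieri's rule --- amounts to re-proving Rosas's Theorem~1 itself, and as written it stops exactly where the mathematical content begins: the reduction of the four signed terms $(h_ah_b)*(h_ch_d)$ to lattice-point counts, the change of coordinates, and above all the cancellation that is supposed to leave precisely the two rectangles $\mathcal{R}_+$ and $\mathcal{R}_-$ are asserted (``should telescope'', ``survives on'') rather than carried out. The one concrete device you propose, the involution $(x,y)\leftrightarrow(y-1,x+1)$ on $\mathcal{Z}\cap\mathcal{L}$, is unverified and looks wrong as stated: it preserves the parity condition defining $\mathcal{L}$ (since $x+y$ is unchanged) but does not preserve the half-plane $y-x\geq \mu_2-\nu_2+1$ defining $\mathcal{Z}$, and you give no argument that it pairs points of opposite sign away from $\mathcal{R}_+\cup\mathcal{R}_-$. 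So, taken as a proof, the proposal has a genuine gap.

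The paper's proof is your fallback option, but with the one check you do not perform made explicit. \citet{Rosas:2001}, Theorem~1, gives Formula \eqref{frosas} verbatim when $\lambda_1\geq\lambda_2+\lambda_3$; when $\lambda_1<\lambda_2+\lambda_3$ it gives the same alternating count with different rectangles, namely $\mathcal{R}'_+=[\lambda_3;\lambda_2]\times[1+\lambda_2;1+\lambda_1]$ and $\mathcal{R}'_-=[\lambda_3;\lambda_2]\times[2+\lambda_2+\lambda_3;2+\lambda_1+\lambda_3]$. The lemma then follows by noting that points of $\mathcal{R}_+\cap\mathcal{R}_-$ contribute nothing to \eqref{frosas}, so one may count over $\mathcal{R}_+\setminus\mathcal{R}_-$ and $\mathcal{R}_-\setminus\mathcal{R}_+$, and these two set differences contain exactly the same integral points as $\mathcal{R}'_+$ and $\mathcal{R}'_-$ respectively. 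This case distinction and rectangle comparison is the entire content of the ``slight simplification'' the lemma claims over Rosas's statement, and it is absent from your proposal: your closing sentence about matching the two counts ``region by region'' cannot be executed without quoting Rosas's theorem, including its case $\lambda_1<\lambda_2+\lambda_3$, which you never do.
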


\begin{proof}
When $\lambda_1 \geq \lambda_2 + \lambda_3$, 
this is a direct application of Theorem 1 of \citet{Rosas:2001}. When $\lambda_1 < \lambda_2+\lambda_3$, the same theorem states that:
\[
g_{\mu,\nu}^{\lambda}=
\Card \left(\mathcal{R}'_+ \cap \mathcal{Z} \cap \mathcal{L}\right)
-
\Card \left(\mathcal{R}'_- \cap \mathcal{Z} \cap \mathcal{L}\right)
\]
with
\[
\mathcal{R}'_+=[\lambda_3;\lambda_2] \times [1+\lambda_2;1+\lambda_1] \quad \text{ and } \quad
\mathcal{R}'_-=[\lambda_3;\lambda_2] \times [2+\lambda_2+\lambda_3;2+\lambda_1+\lambda_3].
\]
In Formula \eqref{frosas} the points of $\mathcal{Z}$ that lie in $\mathcal{R}_+ \cap \mathcal{R}_-$ have no contribution, thus \eqref{frosas} is equivalent to:\begin{equation}\label{frosas2}
g_{\mu,\nu}^{\lambda}=
\Card\left(\left(\mathcal{R}_+ \setminus \mathcal{R}_-\right) \cap \mathcal{Z} \cap \mathcal{L}\right)
-
\Card\left(\left(\mathcal{R}_- \setminus \mathcal{R}_+\right) \cap \mathcal{Z} \cap \mathcal{L}\right).
\end{equation}
We observe that $\mathcal{R}_+ \setminus \mathcal{R}_-$ has the same integral points as $\mathcal{R}'_+$. Similarly $\mathcal{R}_- \setminus \mathcal{R}_+$ has the same integral points as $\mathcal{R}'_-$. Thus Formula \eqref{frosas2} is true, and so is Formula \eqref{frosas} even when $\lambda_1< \lambda_2+\lambda_3$.
\end{proof}

\begin{namedproof}{Proof of \ref{thm:main}}
Let $n$, $\lambda$, $\mu$ and $\nu$ be as in the hypotheses of \ref{lemma:rosas}. The transformation $(x;y) \mapsto (x;2\mu_2+2-y)$ preserves the sets $\mathcal{Z}$ and $\mathcal{L}$. It also transforms $\mathcal{R}_-$ into $\mathcal{R}''_-=[\lambda_3;\lambda_2] \times [2\mu_2-\lambda_1-\lambda_3;2\mu_2-\lambda_1]$.
As a consequence, 
\[
g_{\mu,\nu}^{\lambda}=
\Card\left(\mathcal{R}_+ \cap \mathcal{Z} \cap \mathcal{L}\right)
-
\Card\left(\mathcal{R}''_- \cap \mathcal{Z} \cap \mathcal{L}\right).
\]
Since the points in the intersection $\mathcal{R}_+ \cap \mathcal{R}''_- $ don't contribute to the above difference,
\[
g_{\mu,\nu}^{\lambda}=
\Card\left( \left(\mathcal{R}_+ \setminus \mathcal{R}''_- \right) \cap \mathcal{Z} \cap \mathcal{L}\right)
-
\Card\left(\left(\mathcal{R}''_- \setminus \mathcal{R}_+ \right) \cap \mathcal{Z} \cap \mathcal{L}\right).
\]
 We apply this to the case when $\mu=N\,\alpha$, $\nu=N\,\beta$ and $\lambda=N\,\gamma$, where $N$ is a positive integer and $\alpha$, $\beta$ and $\gamma$ are as in the theorem. Then
\begin{eqnarray*}
\mathcal{R}_+&=&[2Nj;2Ni] \times [1+2Ni;1+2N(i+j)]\\
\text{ and }\mathcal{R}''_-&=&[2Nj;2Ni] \times [2Ni;2N(i+j)].
\end{eqnarray*}
These two rectangles overlap nearly completely. The integral points of $\mathcal{R}_+ \setminus \mathcal{R}''_-$ are those of the North Side of $\mathcal{R}_+$, which is $\mathcal{S}_+=[2Nj;2Ni] \times \{1 +2N(i+j)\}$.
The integral points of $\mathcal{R}''_- \setminus \mathcal{R}_+$ 
 are those on the South Side of $\mathcal{R}''_-$, which is 
$\mathcal{S}_-=[2Nj;2Ni] \times \{2Ni\}$.
It follows that:
\[
g_{N\alpha,N\beta}^{N\gamma}=
\Card \left(\mathcal{S}_+ \cap \mathcal{Z} \cap \mathcal{L}\right)
-
\Card\left(\mathcal{S}_- \cap \mathcal{Z} \cap \mathcal{L}\right).
\]
The hypotheses $i>j>0$ and  $k > 2i+j$ imply that:
\[
\mathcal{S}_+ \cap \mathcal{Z}=\mathcal{S}_+,\qquad
\mathcal{S}_- \cap \mathcal{Z}=\mathcal{S}'_-=[2Nj;2Ni-N-1] \times \{ 2Ni\}.
\]
We have now:
\[
g_{N\alpha,N\beta}^{N\gamma}=
\Card \left(\mathcal{S}_+ \cap \mathcal{L}\right)
-
\Card\left(\mathcal{S}'_- \cap \mathcal{L}\right).
\]
Observe that for any horizontal segment $\mathcal{S}$ of positive length  with integral end points (since $i>j>0$, the segments $\mathcal{S}_+$ and $\mathcal{S}'_-$ fall in this category), 
\[
\Card \left(\mathcal{S} \cap \mathcal{L}\right)=
\left\lbrace
\begin{array}{ll}
\text{length}(\mathcal{S})/2+1 & \textit{ if both end points are in $\mathcal{L}$,}\\
\text{length}(\mathcal{S})/2+1/2 & \textit{ if exactly one end point is in $\mathcal{L}$,}\\
\text{length}(\mathcal{S})/2 & \textit{ if none of the end points is in $\mathcal{L}$.}
\end{array}
\right.
\]
Since $\mathcal{L}=\{(x,y)\in \ZZ^2\,|\, x+y \equiv N+1 \mod 2\}$, we get
\begin{eqnarray*}
\Card\left(\mathcal{S}_+ \cap \mathcal{L}\right)&=&
\left\lbrace
\begin{array}{ll}
N(i-j)+1 & \textit{ for even $N$,}\\
N(i-j)   &\textit{ for odd $N$,}
\end{array}
\right.
\\
&&\\
\Card\left(\mathcal{S}'_- \cap \mathcal{L}\right)&=&
\left\lbrace
\begin{array}{ll}
N(i-j)-(N+1)/2 +1/2    & \textit{ for even $N$,}\\
N(i-j)-(N+1)/2 +1 &\textit{ for odd $N$.}
\end{array}
\right.
\end{eqnarray*}
Formula \eqref{cex} of \ref{thm:main} follows.
\end{namedproof}


\section{Kronecker coefficients from reduced Kronecker coefficients}\label{sec:explain}

The counter--examples of \ref{thm:main} were found by examining exhaustively the simplest non--trivial family of Kronecker coefficients: the Kronecker coefficients indexed by two two--row shapes, \emph{i.e.} the coefficients $g_{\mu,\nu}^{\lambda}$ where $\mu$ and $\nu$ have at most two parts. By Equation \eqref{eq:simplification}, it is enough to consider those with $\ell(\lambda)\leq 3$,  \emph{i.e.} the coefficients $g_{(\mu_1,\mu_2)(\nu_1,\nu_2)}^{(\lambda_1,\lambda_2,\lambda_3)}$. We give here an account of this investigation, which is presented with more detail by \citet{Briand:Orellana:Rosas:FPSAC,Briand:Orellana:Rosas:Chamber}.

\subsection{Quasipolynomial formulas for $g_{(\mu_1,\mu_2)(\nu_1,\nu_2)}^{(\lambda_1,\lambda_2,\lambda_3)}$}\label{subsec:description}

We obtained a description (call it {\bf D}) of $g_{(\mu_1,\mu_2)(\nu_1,\nu_2)}^{(\lambda_1,\lambda_2,\lambda_3)}$ as a \emph{piecewise quasipolynomial function} of $\lambda_1$, $\lambda_2$, $\lambda_3$, $\mu_2$, $\nu_2$ (note that $\mu_1$ and $\nu_1$ are determined from these parameters by the condition $|\lambda|=|\mu|=|\nu|$).

A \emph{(multivariate) quasipolynomial function} is a function of the form:
\[
F: {\bf x} \in \ZZ^m \longmapsto
\left\lbrace
\begin{matrix}
F_1({\bf x}) & \text{ if ${\bf x} \in C_1$,} \\
F_2({\bf x}) &\text{ if ${\bf x} \in C_2$,} \\
\vdots       & \vdots \\
F_k({\bf x}) &\text{ if ${\bf x} \in C_k$,} \\
\end{matrix}
\right.
\]
where $C_1$, $C_2$, \ldots, $C_k$ are the cosets of a full--rank sublattice of $\ZZ^m$, and $F_1$, $F_2$, \ldots, $F_k$ are polynomials.  

Well--known examples of piecewise quasipolynomial functions are 
the \emph{vector partition functions}. Other examples are provided by the functions of ${\bf y} \in \RR^p$ that count the integral points ${\bf x}$ in polytopes $\mathcal{P}({\bf y}) \subset \RR^q$ defined by a system of linear inequalities of the form
\begin{equation}\label{eq:syst}
f_i({\bf x}) \leq g_i({\bf y})
\end{equation}
where $f_i$ and $g_i$ are integral linear forms. \citealp[See][]{Brion:Vergne} for both families of examples. For these examples, the domains of quasipolynomiality are the maximal cells of a fan (a complex of convex rational polyhedral cones) subdividing a cone of $\RR^n$ (outside the cone the function vanishes).

The explicit formulas obtained by \citet{Remmel:Whitehead} for the coefficients $g_{(\mu_1,\mu_2)(\nu_1,\nu_2)}^{(\lambda_1,\lambda_2,\lambda_3,\lambda_4)}$ are quasipolynomials whose domains are the maximal cells of a polyhedral complex. But the number of domains is huge \citepalias[see][Section 6.2]{GCT6}, which makes these formulas unsuitable for checking conjectures {\bf Strong PH2} and {\bf Strong SH}. Note that in this description, the domains of quasipolynomiality are not cones.

In the description {\bf D} that we obtained, 
the domains of quasipolynomiality are the $74$ maximal cells of a fan 
subdividing a cone $\mathcal{C}$. Note this suggests a stronger form of {\bf PH1}: That $g_{(\mu_1,\mu_2)(\nu_1,\nu_2)}^{(\lambda_1,\lambda_2,\lambda_3)}$ counts the integral points of polytopes $\mathcal{P}(\lambda,\mu,\nu)$ defined by a system of inequalities of the form \eqref{eq:syst}.

\begin{remark} This is the case for the Littlewood--Richardson coefficients: $c_{\mu,\nu}^{\lambda}$ counts the integral points of the Hive polytopes $\mathcal{H}(\lambda,\mu,\nu)$, defined by systems of inequalities of the form \eqref{eq:syst}. \citealp[See][]{Buch}. \citealp[See also][]{Rassart} for the corresponding piecewise quasipolynomial presentation. (It turns out to be piecewise polynomial.)  
\end{remark}

We don't know any family of polytopes whose integral points are counted by $g_{(\mu_1,\mu_2)(\nu_1,\nu_2)}^{(\lambda_1,\lambda_2,\lambda_3)}$. But:
\begin{itemize}
\item[(i)] The reduced Kronecker coefficients (see \ref{subsec:reduced}) indexed by two one--row shapes, have such an interpretation.
\item[(ii)] We are able to recover the Kronecker coefficients from the reduced Kronecker coefficients.
\end{itemize}
This is how we obtained {\bf D}.

We explain (i). 
Assume $\mu_2 \geq \nu_2$. By fixing $\lambda_2$, $\lambda_3$, $\mu_2$, $\mu_3$ and making $|\lambda|=|\mu|=|\nu| \rightarrow \infty$ in  \ref{lemma:rosas}, the rectangle $\mathcal{R}_-$ goes outside $\mathcal{Z}$ while $\mathcal{R}_+$ does not move. It follows that the reduced Kronecker coefficient $\overline{g}_{(\mu_2)(\nu_2)}^{(\lambda_2,\lambda_3)}=\Card\left( \mathcal{R}^+ \cap \mathcal{Z} \cap \mathcal{L}\right)$. The map
\[
(x,y) \longmapsto \left( 
\frac{y-x+r+s-1}{2},
\frac{-y-x+r+s+1}{2}
\right)
\]
transforms the set $\mathcal{L}$ into $\ZZ^2$, and $\mathcal{R}_+ \cap \mathcal{Z}$ into the set of solutions $(x,y)$ of the system:
\begin{equation}\label{eq:system}
\left\lbrace
\begin{array}{l}
x \geq \mu_2,\\
y \geq 0,\\
\mu_2+\nu_2-\lambda_2 \leq x+y \leq \mu_2+\nu_2-\lambda_3,\\
\lambda_2 \leq x-y \leq \lambda_2+\lambda_3
\end{array}
\right.
\end{equation}
This yields the following result. Note that an equivalent result is given by \citet{Thibon}.
\begin{proposition}
Let $\mu_2$, $\nu_2$, $\lambda_2$, $\lambda_3$ be nonnegative integers with $\lambda_2 \geq \lambda_3$. The Reduced Kronecker Coefficient $\overline{g}_{(\mu_2)(\nu_2)}^{(\lambda_2,\lambda_3)}$ counts the integral solutions $(x,y)$ of \eqref{eq:system}.
\end{proposition}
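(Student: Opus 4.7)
My plan mirrors the sketch laid out in the paragraphs preceding the proposition. The strategy is to apply Rosas's formula (\ref{lemma:rosas}) to a stabilizing family of Kronecker coefficients, discard the contribution of the rectangle $\mathcal{R}_-$ which moves to infinity, and then perform the explicit affine substitution written in the excerpt to identify the surviving lattice-point count with the number of solutions of the system \eqref{eq:system}.

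First, by Murnaghan's theorem (cf.\ \ref{subsec:reduced}), for $N$ large enough (in particular $N \geq 2\mu_2 + 2\nu_2 + 2\lambda_2 + \lambda_3$, so that the triples below are genuine partitions and stability has kicked in),
\[
\overline{g}_{(\mu_2)(\nu_2)}^{(\lambda_2,\lambda_3)}
=g_{(N-\mu_2,\mu_2),(N-\nu_2,\nu_2)}^{(N-\lambda_2-\lambda_3,\lambda_2,\lambda_3)}.
\]
Since both the statement and the system \eqref{eq:system} are symmetric in $\mu_2$ and $\nu_2$, and since $g_{\mu,\nu}^{\lambda}=g_{\nu,\mu}^{\lambda}$, I may assume $\mu_2\geq\nu_2$, which is the hypothesis of \ref{lemma:rosas}. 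That lemma then writes the Kronecker coefficient as $\Card(\mathcal{R}_+\cap\mathcal{Z}\cap\mathcal{L})-\Card(\mathcal{R}_-\cap\mathcal{Z}\cap\mathcal{L})$, where $\mathcal{R}_+$, $\mathcal{Z}$ and $\mathcal{L}$ depend only on $\mu_2,\nu_2,\lambda_2,\lambda_3$, while $\mathcal{R}_-=[\lambda_3,\lambda_2]\times[2+\lambda_1,2+\lambda_1+\lambda_3]$ depends on $\lambda_1=N-\lambda_2-\lambda_3$. For any $(x,y)\in\mathcal{R}_-$ we have $x+y\geq \lambda_3 + 2 + \lambda_1=N-\lambda_2+2$, whereas $\mathcal{Z}$ imposes $x+y\leq\mu_2+\nu_2+1$, so for $N$ large enough the intersection $\mathcal{R}_-\cap\mathcal{Z}$ is empty. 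Passing to the limit yields $\overline{g}_{(\mu_2)(\nu_2)}^{(\lambda_2,\lambda_3)}=\Card(\mathcal{R}_+\cap\mathcal{Z}\cap\mathcal{L})$.

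The final step is to perform the affine map $(x,y)\mapsto(u,v)$ with $u=(y-x+\mu_2+\nu_2-1)/2$ and $v=(-y-x+\mu_2+\nu_2+1)/2$, whose inverse is $x=\mu_2+\nu_2-u-v$ and $y=u-v+1$. The parity condition defining $\mathcal{L}$ is exactly the condition that $(u,v)$ have integer coordinates, so the restriction of this map to $\mathcal{L}$ is a bijection onto $\ZZ^2$. A direct substitution then transforms the bounds $\lambda_3\leq x\leq\lambda_2$ into $\mu_2+\nu_2-\lambda_2\leq u+v\leq\mu_2+\nu_2-\lambda_3$, the bounds $1+\lambda_2\leq y\leq 1+\lambda_2+\lambda_3$ into $\lambda_2\leq u-v\leq\lambda_2+\lambda_3$, the inequality $x+y\leq\mu_2+\nu_2+1$ into $v\geq 0$, and the inequality $y-x\geq\mu_2-\nu_2+1$ into $u\geq\mu_2$. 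Renaming $(u,v)$ back to $(x,y)$ produces exactly the system \eqref{eq:system}.

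The only point demanding care is the bookkeeping in the last step: one must verify that each of the six inequalities cutting out $\mathcal{R}_+\cap\mathcal{Z}$ transforms into the corresponding inequality of \eqref{eq:system} with the correct constants, and that the parity condition in $\mathcal{L}$ matches exactly the integrality of $(u,v)$. None of this is deep, and everything else in the argument is a direct application of the tools already assembled in the paper.
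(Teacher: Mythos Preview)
Your proof is correct and follows exactly the approach sketched in the paper: take the stable limit in \ref{lemma:rosas}, observe that $\mathcal{R}_-$ escapes $\mathcal{Z}$, and then apply the affine change of variables to identify $\mathcal{R}_+\cap\mathcal{Z}\cap\mathcal{L}$ with the integer solutions of \eqref{eq:system}. You have in fact filled in more detail than the paper gives (the explicit justification of the symmetry reduction $\mu_2\geq\nu_2$, the explicit inverse map, and the line-by-line verification of each inequality), and all of it checks out.
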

It follows a piecewise quasipolynomial description ${\bf D}_0$ for $\overline{g}_{(\mu_2)(\nu_2)}^{(\lambda_2,\lambda_3)}$. Its domain of quasipolynomiality are the $26$ maximal cells of a fan subdividing a cone of $\RR^4$. 

We now explain (ii). There exists a simple formula to recover Kronecker coefficients from reduced Kronecker coefficients.
\begin{theorem}[\citealp{Briand:Orellana:Rosas:Chamber}]
Let $\ell_1$ and $\ell_2$ be positive integers. Let 
$\lambda$, $\mu$, $\nu$ be partitions of the same weight such that $\ell(\mu)\leq \ell_1$, $\ell(\nu) \leq \ell_2$ and $\ell(\lambda) \leq \ell_1 \ell_2$. Then
\[
g_{\mu \nu}^{\lambda}=
\sum_{i=1}^{\ell_1 \ell_2-1} (-1)^{i+1}\; \overline{g}_{\cut{\mu}, \cut{\nu}}^{\lambda^{\dagger i}}
\]
where $\lambda^{\dagger i}$ is the partition obtained from $\lambda$ by incrementing the $i-1$ first terms and removing the $i$--th term, that is:
\[
\lambda^{\dagger i}=\left(1+\lambda_1,1+\lambda_2,\ldots,1+\lambda_{i-1},\lambda_{i+1},\lambda_{i+2}, \ldots\right)
\]
and $\cut{\lambda}=\lambda^{\dagger 1}$.
\end{theorem}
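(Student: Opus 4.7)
The plan is to combine Macdonald's theory of character polynomials with a straightening identity for Schur functions indexed by integer sequences. For each partition $\alpha$, Macdonald's character polynomial $\widetilde{\chi}^{\alpha}$ evaluates on a permutation of cycle type $\rho$ in $\S_n$ (for $n$ sufficiently large) to the irreducible character value $\chi^{(n - |\alpha|, \alpha)}(\rho)$, and products of such polynomials decompose through the reduced Kronecker coefficients:
\[
\widetilde{\chi}^{\alpha} \cdot \widetilde{\chi}^{\beta} = \sum_{\gamma} \overline{g}_{\alpha, \beta}^{\gamma} \, \widetilde{\chi}^{\gamma}.
\]
Setting $n = |\lambda|$ and writing $\mu = (n - |\cut{\mu}|, \cut{\mu})$ and $\nu = (n - |\cut{\nu}|, \cut{\nu})$, evaluation at $\S_n$ gives the class-function identity
\[
\chi^{\mu} \cdot \chi^{\nu} = \sum_{\gamma} \overline{g}_{\cut{\mu}, \cut{\nu}}^{\gamma} \, (\widetilde{\chi}^{\gamma}|_n),
\]
where $\widetilde{\chi}^{\gamma}|_n$ must be interpreted via Schur function straightening when $(n - |\gamma|, \gamma)$ is not itself a partition.

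Next, I would determine which $\gamma$ contribute a nonzero multiple of $\chi^{\lambda}$ to $\widetilde{\chi}^{\gamma}|_n$. Padding $\lambda$ with zeros to length $k = \ell_1 \ell_2$ and setting $\beta_j = \lambda_j + k - j$, the Weyl bialternant formula says that the formal Schur function indexed by $(n - |\gamma|, \gamma_1, \ldots, \gamma_{k-1})$ straightens to $\pm s_{\lambda}$ exactly when the shifted sequence $(n - |\gamma| + k - 1, \gamma_1 + k - 2, \ldots, \gamma_{k-1})$ is a permutation of $(\beta_1, \ldots, \beta_k)$. A direct combinatorial analysis, identifying which $\beta_j$ is missing from the shifted tail, shows that the contributing $\gamma$ are precisely $\gamma = \lambda^{\dagger j}$ for $j = 1, \ldots, k$, with sign $(-1)^{j-1}$ given by the cyclic shift sending position $j$ to position $1$.

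Comparing coefficients of $\chi^{\lambda}$ on both sides yields the preliminary identity $g_{\mu, \nu}^{\lambda} = \sum_{j=1}^{k} (-1)^{j-1} \overline{g}_{\cut{\mu}, \cut{\nu}}^{\lambda^{\dagger j}}$. To obtain the stated formula, I would then show that the $j = k$ term vanishes. Writing $\overline{g}_{\cut{\mu}, \cut{\nu}}^{\lambda^{\dagger k}}$ as the stable limit of the ordinary Kronecker coefficients $g_{(N - |\cut{\mu}|, \cut{\mu}),\, (N - |\cut{\nu}|, \cut{\nu})}^{(N - |\lambda^{\dagger k}|,\, \lambda^{\dagger k})}$ and applying the determinantal reduction in Equation (3) of the text: since $\lambda^{\dagger k} = (1 + \lambda_1, \ldots, 1 + \lambda_{k-1})$ has last part $1 + \lambda_{k-1} \geq 1$, the stabilized top partition has length exactly $k$ with $k$-th part equal to $1 + \lambda_{k-1}$. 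The reduction then requires $\cut{\mu}_{\ell_1 - 1} \geq \ell_2 (1 + \lambda_{k-1})$ (and analogously for $\cut{\nu}$), which, combined with the weight constraint $|\mu| = |\nu| = |\lambda|$ and $\ell(\mu) \leq \ell_1$, forces the vanishing.

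The main obstacle I anticipate is the second step: the careful bijective identification of the signs through the bialternant sort, verifying that each $\gamma = \lambda^{\dagger j}$ is obtained exactly once and that no other $\gamma$ contributes. A secondary obstacle is the vanishing of the $j = k$ term, which must be established uniformly across all admissible $(\mu, \nu, \lambda)$; the interplay between the length constraint $\ell(\lambda) \leq \ell_1 \ell_2$ and the specific shape of $\lambda^{\dagger k}$ makes this a subtle but tractable calculation.
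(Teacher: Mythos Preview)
The paper itself does not prove this theorem; it is quoted from the companion work of Briand, Orellana \& Rosas (in preparation), so there is no in-text proof to compare against. Your route via character polynomials and Jacobi--Trudi straightening is a natural one, and the first two steps are correctly set up: the partitions $\gamma$ with $\ell(\gamma)\le k-1$ (where $k=\ell_1\ell_2$) for which the virtual character $\chi^{(n-|\gamma|,\gamma)}$ straightens to $\pm\chi^\lambda$ are indeed exactly $\gamma=\lambda^{\dagger j}$ for $j=1,\dots,k$, with sign $(-1)^{j-1}$, and this yields the preliminary identity $g^\lambda_{\mu,\nu}=\sum_{j=1}^{k}(-1)^{j-1}\,\overline g^{\,\lambda^{\dagger j}}_{\bar\mu,\bar\nu}$.

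The genuine gap is your third step. The claim that the reduction inequality $\mu_{\ell_1}\ge \ell_2(1+\lambda_{k-1})$ (this is what $\bar\mu_{\ell_1-1}\ge \ell_2(1+\lambda_{k-1})$ says), ``combined with the weight constraint $|\mu|=|\nu|=|\lambda|$ and $\ell(\mu)\le\ell_1$, forces the vanishing'' is simply not true. Take $\ell_1=2$, $\ell_2=3$ (so $k=6$) and any $\lambda$ with $\lambda_5=0$: the inequality becomes $\mu_2\ge 3$, which is perfectly compatible with $\ell(\mu)\le 2$ and $|\mu|=|\lambda|$ --- for instance $\mu=(15,15)$, $\nu=(10,10,10)$, $|\lambda|=30$. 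One application of the reduction lemma then leaves you with a smaller but a priori nonzero Kronecker coefficient; neither that single step nor the weight equality by itself closes the argument. The $j=k$ term does vanish, but establishing this uniformly requires more than you have sketched: one must iterate the reduction and track how the bottom partitions eventually lose a part, or invoke an independent structural bound on $\overline g^{\,\lambda^{\dagger k}}_{\bar\mu,\bar\nu}$ beyond the length and weight constraints you use. Calling it ``a subtle but tractable calculation'' based on one reduction plus the weight equality underestimates what is actually needed here.
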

In particular, 
\begin{equation}\label{eq:from gbar}
g_{(\mu_1,\mu_2)(\nu_1,\nu_2)}^{(\lambda_1,\lambda_2,\lambda_3)}=
\overline{g}_{(\mu_2)(\nu_2)}^{(\lambda_2,\lambda_3)}
-
\overline{g}_{(\mu_2)(\nu_2)}^{(\lambda_1+1,\lambda_3)}
+
\overline{g}_{(\mu_2)(\nu_2)}^{(\lambda_1+1,\lambda_2+1)}.
\end{equation}
Formula \eqref{eq:from gbar} allows us to 
deduce from the description ${\bf D}_0$ a piecewise quasipolynomial description for $g_{(\mu_1,\mu_2)(\nu_1,\nu_2)}^{(\lambda_1,\lambda_2,\lambda_3)}$. Its domains of quasipolynomiality are the maximal cells of a polyhedral complex. But these cells are not cones. This is fixed to obtain the description {\bf D} where the domain of quasipolynomiality are cones. The key observation for this is: The quasipolynomials attached to contiguous maximal cells in ${\bf D}_0$ coincide not only on their common border, but also on close parallel hyperplanes.

\begin{remark}
It is also possible, in principle, to derive a piecewise quasipolynomial presentation for $g_{\mu,\nu}^{\lambda}$, $\ell(\mu) \leq m$, $\ell(\nu) \leq n$, $\ell(\lambda) \leq mn$, from Klimyk's formula, for any $m$, $n$. This is explained in \citetalias[][section 4.5]{GCT6}. Nevertheless, the domains of quasipolynomiality obtained this way are not cones. 
\end{remark}

\subsection{Checking {\bf Strong PH2} and {\bf Strong SH} for $g_{(\mu_1,\mu_2)(\nu_1,\nu_2)}^{(\lambda_1,\lambda_2,\lambda_3)}$}\label{subsec:get cex}

We explain how we check when the stretching function 
$\stretchg_{(\mu_1,\mu_2)(\nu_1,\nu_2)}^{(\lambda_1,\lambda_2,\lambda_3)}$ is positive and/or saturated from the description {\bf D}. 
For each of the $74$ maximal cells $\sigma$ in {\bf D}, the quasipolynomial formula on $\sigma$  has the form 
\begin{equation}\label{eq:shape}
g_{(\mu_1,\mu_2)(\nu_1,\nu_2)}^{(\lambda_1,\lambda_2,\lambda_3)}
=1/4\;Q_{\sigma}({\bf x})+1/2\;L_{\sigma}({\bf x})+\Delta_{\sigma}({\bf x})
\end{equation}
 where $Q_{\sigma}$ and $L_{\sigma}$ are
integral homogeneous polynomials in the variable ${\bf x}=(\lambda_1, \lambda_2, \lambda_3, \mu_2, \nu_2)$, respectively quadratic and linear. The function $\Delta_{\sigma}$ fulfills $\Delta_{\sigma}(0)=1$ and is constant on each coset of $\ZZ^5$ modulo the sublattice defined by $\mu_2+\nu_2 \equiv \lambda_1 \equiv \lambda_2 \equiv \lambda_3 \equiv 0 \mod 2$.

Checking positivity is specially easy because in \eqref{eq:shape} the functions $Q_{\sigma}$, $L_{\sigma}$, $\Delta_{\sigma}$ are nonnegative on $\sigma$ for nearly all cells $\sigma$. Indeed:
\begin{itemize}
\item For all $\sigma$ and all integral points ${\bf x} \in \sigma$, one has $Q_{\sigma}({\bf x})=\lim_{N\to\infty} g_{N\mu,N\nu}^{N\lambda}/N^2$, which is necessarily nonnegative.
\item We check by direct inspection that for all cells $\sigma$ and all ${\bf x}\in \sigma$, one has $L_{\sigma} ({\bf x})\geq 0$.
\item By direct inspection, for all cells $\sigma$ except four, $\Delta_{\sigma}({\bf x}) \geq 0$ for all ${\bf x} \in \sigma$. Let $A$ be the subset of $\ZZ^5$ defined by $\lambda_1 \equiv \lambda_2 \equiv \lambda_3 \equiv \mu_2+\nu_2+1 \equiv 0 \mod 2$.  For three of the four exceptional cells, call them $\sigma_1$, $\sigma_2$ and $\sigma_3$, one has, for all integral points ${\bf x} \in \sigma_i$, that $\Delta_{\sigma_i}({\bf x})=-1/4$ if ${\bf x}\in A$ and $\Delta_{\sigma_i} ({\bf x})\geq 0$ else. For the fourth exceptional cell, call it $\sigma_4$, one has, for all integral points ${\bf x} \in \sigma_4$, that  $\Delta_{\sigma_4}({\bf x})=-1/2$ if ${\bf x} \in A$ and $\Delta_{\sigma_4}({\bf x}) \geq 0$ else.
\end{itemize}
Therefore, the counter--examples to {\bf Strong PH2} correspond to ${\bf x} \in A \cap (\sigma_1 \cup \sigma_2 \cup \sigma_3 \cup \sigma_4)$.

Since {\bf Strong PH2} implies {\bf Strong SH}, we look for the counter--examples to {\bf Strong SH} among the counter--examples to {\bf Strong PH2}. 

For ${\bf x} \in A \cap \sigma_i$, $i \in \{1,2,3\}$, the coefficient $g_{\mu,\nu}^{\lambda}$ is zero if and only if 
$Q_{\sigma_i}({\bf x})/4+L_{\sigma_i}({\bf x})/2=1/4$, which is equivalent to $Q_{\sigma_i}({\bf x})-1=L_{\sigma_i}({\bf x})=0$ (because $Q_{\sigma_i}$ and $L_{\sigma_i}$ are integral). But $L_{\sigma_i}({\bf x})=0$ is found to be incompatible with ${\bf x} \in A$. Hence, the cells $\sigma_1$, $\sigma_2$, $\sigma_3$ provide no counter--example to {\bf Strong SH}.

For ${\bf x} \in A \cap \sigma_4$, 
the coefficient $g_{\mu,\nu}^{\lambda}$ is zero if and only if 
$Q_{\sigma_4}({\bf x})/4+L_{\sigma_4}({\bf x})/2=1/2$, which is equivalent to $Q_{\sigma_4}({\bf x})-2=L_{\sigma_4}({\bf x})=0$ or $Q_{\sigma_4}({\bf x})=L_{\sigma_4}({\bf x})-1=0$. The case $Q_{\sigma_4}({\bf x})-2=L_{\sigma_4}({\bf x})=0$ 
is discarded because $L_{\sigma_4}({\bf x})=|\lambda|-\mu_2-\nu_2$, whose vanishing is incompatible with ${\bf x}\in A$. On the contrary, $L_{\sigma_4}({\bf x})=1$ is compatible with ${\bf x} \in A$. Last $Q_{\sigma_4}$ factorizes as $(|\lambda|-2\;\mu_2)(|\lambda|-2\;\nu_2)$. The counter--examples to {\bf Strong SH} in $\sigma_4$ are thus given by ${\bf x}\in A$ and $|\lambda|=\mu_2+\nu_2+1=2\;\mu_2$. These are the counter--examples given in \ref{thm:main}.

\begin{remark} The reduced Kronecker coefficients $\overline{g}_{(\mu_2)(\nu_2)}^{(\lambda_2,\lambda_3)}$ fulfill Hypothesis {\bf Strong PH2}, and thus also satisfy  {\bf Strong SH}. \citep[See][]{Briand:Orellana:Rosas:Chamber}.
\end{remark}

\begin{remark}\label{rem:positivity index}
Let $\mu$, $\nu$ be partitions with at most two parts. Let $\lambda$ be a partition with at most three parts. Let ${\bf x}=(\lambda_1,\lambda_2,\lambda_3,\mu_2,\nu_2)$. Then
\[
\widetilde{g}_{\mu,\nu}^{\lambda}(N+1)=
N^2 /4\;Q_{\sigma}({\bf x})
+N/2\;\left(
L_{\sigma}({\bf x})+Q_{\sigma}({\bf x})\right)
+g_{\mu,\nu}^{\lambda}.
\]
Remember that for all cells $\sigma$ the polynomials $Q_{\sigma}$ and $L_{\sigma}$ are nonnegative on $\sigma$. This shows that for the Kronecker coefficients indexed by two two--row shapes, the \emph{positivity index} (see the appendix) is always at most $1$. 
\end{remark}


\section{On the complexity of computing the Kronecker Coefficients}\label{sec:relevance}

In this section we review the proof of the \compc{\#P}--hardness of \pb{Kron} given by \citet{Burgisser:Ikenmeyer}, and related results. Next we propose a new, very simple proof using known properties of the reduced Kronecker coefficients. This further underlines the relevance of the reduced Kronecker coefficients in computational complexity issues related to representation--theoretical structural constants.

The class \#P is a class of counting problems introduced by \citet{Valiant} in his study of the complexity of computing the permanent \citep[see also][Ch. 17]{Arora:Barak}. It consists of all functions $f: \{0,1\}^* \rightarrow \NN$ such that there exists a Turing machine $M$ working in polynomial time, and a polynomial $p$ such that for all $n \in \NN$ and all $x \in \{0,1\}^n$, 
\[
f(x)=\Card \,\{ y \in \{0,1\}^{p(n)}\,|\, M \text{ accepts } (x,y)\}.
\] 

A counting problem, corresponding to $g:\{0,1\}^* \rightarrow \NN$, is \#P--hard if for every function $f$ in \#P, there exists a polynomial--time Turing reduction from $f$ to $g$ (\emph{i.e.} $f$ is computed in polynomial time by a Turing Machine that has access to an oracle for $g$). The problem is \#P--complete if it is \#P--hard and belongs to \#P.
\emph{Parsimonious reductions} are a special kind of polynomial--time Turing reductions. Given two functions $f$, $g:\{0,1\}^* \rightarrow \NN$, one says that \emph{$f$ reduces parsimoniously to $g$} (notation: $f \Rp g$) if there exists a function $\varphi:\{0,1\}^* \rightarrow \{0,1\}^*$, computable in polynomial time, such that $f(x)=g(\varphi(x))$ for all $x \in \{0,1\}^*$.

Let $\lambda$ and $\mu$ be two partitions of length at most $n$. The \emph{Kostka number} $K_{\lambda,\mu}$ is the dimension of the weight space of weight $\mu$ in the irreducible representation $V_{\lambda}(GL_n(\CC))$.  Consider the following three problems:
\begin{Problems}
\item \pb{LRCoeff}: Given partitions $\lambda$, $\mu$, $\nu$ such that $|\lambda|=|\mu|+|\nu|$, compute the Littlewood--Richardson coefficient $c_{\mu,\nu}^{\lambda}$.
\item \pb{Kostka}: Given partitions $\lambda$ and $\mu$, compute the Kostka number $K_{\lambda,\mu}$.
\item \pb{KostkaSub}: Given partitions $\lambda$ and $\mu$, with $\ell(\lambda) \leq 2$, compute the Kostka number $K_{\lambda,\mu}$.
\end{Problems}
\citet{Narayanan} showed that \pb{LRCoeff} is \#P--complete
 as follows:
\begin{itemize}
\item \pb{LRCoeff} is in \#P because $c_{\mu,\nu}^{\lambda}$ counts the integral points of the Hive polytopes, which are described as the solution sets of systems of linear inequalities with size polynomial in the bitlength of $\lambda$, $\mu$, $\nu$ (that \pb{LRCoeff} is in \#P follows the same way from the Littlewood--Richardson rule, \citetalias[see][]{GCT3}).
\item He showed that $\pb{KostkaSub} \Rp \pb{Kostka} \Rp \pb{LRCoeff}$ and 
that a known \#P--complete problem (counting all contingency tables with two rows and prescribed column sums and row sums, see \citealp{Dyer:Kanna:Mount})
 reduces parsimoniously to $\pb{KostkaSub}$.
\end{itemize}

\citet{Burgisser:Ikenmeyer} showed that \textsc{Kron} is \#P--hard by exhibiting a reduction $\textsc{KostkaSub} \Rp \textsc{Kron}$ based on combinatorial constructions (\emph{Kronecker Tableaux}) due to \citet{Ballantine:Orellana}. Their proof, yet elementary, requires careful attention to detail. We propose an alternative, very simple, proof of the \#P--hardness of \pb{Kron}, relying on known properties of the reduced Kronecker coefficients. Consider the problem:
\begin{Problems}
\item \pb{RKron:} Given partitions $\lambda$, $\mu$, $\nu$, compute the reduced Kronecker coefficient $\overline{g}_{\mu,\nu}^{\lambda}$.
\end{Problems}
\noindent The \#P--hardness of \pb{Kron} is a consequence of the \#P--hardness of \pb{LRCoeff} and the existence of the following reductions:
\begin{itemize}
\item $\pb{LRCoeff} \Rp \pb{RKron}$. This follows from an observation by \citet{Murnaghan:1955} proved by \citet{Littlewood:1958}: For all partitions $\lambda$, $\mu$, $\nu$ such that $|\lambda|=|\mu|+|\nu|$, one has  $c_{\mu,\nu}^{\lambda}=\overline{g}_{\mu,\nu}^{\lambda}$.
\item $\pb{RKron} \Rp \pb{Kron}$. This follows from the existence of polynomial (even linear) bounds for the stabilization of the sequence of general term $g_{(n-|\alpha|,\alpha)(n-|\beta|,\beta)}^{(n-|\gamma|,\gamma)}$. For instance, by \citet{Vallejo}, 
\[
\overline{g}_{\alpha,\beta}^{\gamma}=g_{(n-|\alpha|,\alpha)(n-|\beta|,\beta)}^{(n-|\gamma|,\gamma)} 
\]
for all  $n \geq |\alpha|+|\beta|+\alpha_1+\beta_1+2\,|\gamma|$.
\end{itemize}

In the progression of computation problems associated to the structural constants of the representation theory of the groups $GL_n(\CC)$ and $\S_n$:
\[
\pb{Kostka} \Rp \pb{LRCoeff} \Rp \pb{RKron} \Rp \pb{Kron}
\]
(which can be continued further to the right with the structural constants of plethysm), 
the reduced Kronecker coefficients sit between the well--understood Littlewood--Richardson coefficients, and the still mysterious Kronecker coefficients. Understanding the reduced Kronecker coefficients may be a fruitful approach towards the understanding of the Kronecker coefficients. We introduced 
the reduced Kronecker coefficients in \ref{subsec:reduced} as limits of sequences of Kronecker coefficients, but they can be defined directly as the structural constants for the \emph{Character Polynomials} \citep[see][I.7, ex. 13 and ex. 14]{Macdonald}.


\newpage


\section*{Appendix:\\
 Erratum to the saturation hypothesis (SH)\\
in ``Geometric Complexity Theory VI'',\\
 by Ketan  Mulmuley}

Hypotheses {\bf SH}, {\bf PH2} and {\bf PH3} are corrected in this appendix  by appropriate relaxation without affecting the overall  approach of Geometric Complexity Theory (GCT).

Let $H=GL_n(\CC)\times GL_n(\CC)$ and 
$\rho: H \rightarrow G=GL(\CC^n \otimes \CC^n)=GL_{n^2}(\CC)$ the natural 
embedding. Let $\lambda$ and $\mu$ be partitions of length at most $n$,
and $\pi$ a partition of length at most $n^2$.
By a partition $\lambda$,  we mean a sequence $\lambda: \lambda_1 \ge 
\lambda_2 \ge \cdots \lambda_k >0$ of nonnegative integers, where
 $k$ is called the height or length of $\lambda$. Let $V_\lambda(GL_n(\CC))$
and $V_\mu(GL_n(\CC))$ be the Weyl modules of $GL_n(\CC)$ indexed 
by the partitions $\lambda$ and $\mu$, and $V_\pi(G)$ 
the Weyl module of $G$ indexed by $\pi$.
Let  $g_{\lambda,\mu}^\pi$ be the Kronecker coefficient. This
is the multiplicity of the 
$H$-module $V_\lambda(GL_n(\CC)) \otimes V_\mu(GL_n(\CC))$ in the $G$-module
$V_\pi(G)$, considered as an $H$-module via the embedding $\rho$.
Let $\tilde g_{\lambda,\mu}^\pi(n)=g_{n \lambda, n \mu}^{n \pi}$ be 
the associated stretching function, where $n$ is a positive integer.
It is shown in \citetalias{GCT6} that it is a quasipolynomial. 

Here we say that $f(n)$ is a quasipolynomial if there exist polynomials
$f_i(n)$, $1\le i \le l$, for some positive integer $l$, such that
$f(n)=f_i(n)$  if $n=i$ modulo $l$. We say it is {\em positive} 
if the coefficients of $f_i(n)$ are nonnegative for all $i$. We say it is
{\em saturated} if 
$f_i(n) > 0$ for every $n\ge 1$ whenever $f_i(n)$ is not identically zero--this
definition is slightly stronger than the one in \citetalias{GCT6}. Positivity
implies saturation.

It was conjectured in \citetalias{GCT6} that:
\begin{Problems}
\item {\bf (SH)}  \it{$\tilde g_{\lambda,\mu}^\pi(n)$ is  saturated.}
\end{Problems}
More strongly, 
\begin{Problems}
\item {\bf (PH2)} \it{$\tilde g_{\lambda,\mu}^\pi(n)$ is  positive.}
\end{Problems}
\ref{thm:main} and \ref{subsec:PH2} give counter examples to {\bf SH} and {\bf PH2} 
as stated. A similar phenomenon was also reported in \citetalias{GCT7,GCT8},
where it was observed that the structural constants of the 
nonstandard quantum groups associated with the  plethysm 
problem (of which the Kronecker problem is a special case) need not satisfy
an analogue of {\bf PH2}. But it was observed there that {\bf PH2} holds after 
a small adjustment. Specifically, define the {\em positivity index} 
$p(f)$ of a quasipolynomial $f(n)$ 
to be the smallest nonnegative integer such that
$f(n+p(f))$ is positive. The saturation index $s(f)$ is defined
 similarly.
As per the experimental evidence in \citetalias{GCT7,GCT8},
the positivity (and hence 
saturation) indices of the structural constants there are  small, though
not always zero; e.g. see Figures 30, 33, 35 in \citetalias{GCT8}. 
The same can be expected here. 
This is also supported by 
the experimental evidence for the Kronecker coefficients indexed by two two--row shapes, where too it may be observed that the positivity index is  small (see \ref{subsec:relaxed} and \ref{rem:positivity index}).

This leads to the following relaxed forms of {\bf SH} and {\bf PH2}.

\begin{hypothesis*}[{\bf SH}]

\noindent 
(a) The quasipolynomial  $k(n)=\tilde g_{\lambda,\mu}^\pi(n)$ is almost 
saturated; i.e. $s(k)=O(\poly(h))$, 
where $h \le n$ is the maximum of
the  heights  of $\lambda,\mu$ and $\pi$.
This means there exist nonnegative constants $a$ and $b$
(independent of $n$, $\lambda,\mu$ and $\pi$)
such that $s(k)\le a{h^b}$,

\noindent (b) The quasipolynomial
$\tilde g_{\lambda,\mu}^\pi(n)$ is strictly saturated, i.e. the
saturation index is zero,  for almost all $\lambda, \mu$, $\pi$. 
Specifically, the density of the triples $(\lambda,\mu,\pi)$ 
of total bit length $N$ with nonzero $g_{\lambda,\mu}^\pi$
for which the saturation index is not zero
is less than $1/N^{c''}$, for any positive constant $c''$, as 
$N\rightarrow \infty$. 
\end{hypothesis*}

The stronger hypothesis  than (a) is the following.

\begin{hypothesis*}[{\bf PH2}]

The quasipolynomial  $k(n)=\tilde g_{\lambda,\mu}^\pi(n)$ is almost
positive. This means $p(k)=O(\poly(h))$.
\end{hypothesis*}

Let 
\[K(t)=K_{\lambda,\mu}^\pi(t)=\sum_{n\ge 0} \tilde g_{\lambda,\mu}^\pi(n) t^n.\] 
It is shown in \citetalias{GCT6} that $K(t)$ is rational function
that can be expressed in a positive form:
\begin{equation} 
K(t)=\f{h_d t^d +\cdots + h_0}{\prod_{i=0}^k (1-t^{a_i})^{d_i}},
\end{equation} 
where (1) $h_0=1$, and $h_i$'s are nonnegative integers, (2) 
$a_i$'s and $d_i$'s are positive integers, 
(3)  $\sum_i d_i=d+1$, where $d$ is the degree of the quasipolynomial
$\tilde g_{\lambda,\mu}^\pi(n)$.
This positive form is not unique. But \citetalias{GCT6} also associates
with $g_{\lambda,\mu}^\pi$ a unique positive form that is minimal in 
a certain sense. 
Let $\chi_{\lambda,\mu}^\pi=\max\{a_i\}$, 
where $a_i$'s are the degrees occurring in this minimal form. We
call it the {\em modular index} of $g_{\lambda,\mu}^\pi$. 

The statement of {\bf PH3} in \citetalias{GCT6} should be replaced by
the following one.

\begin{hypothesis*}[{\bf PH3}]

The  modular index $\chi_{\lambda,\mu}^\pi=O(\poly(h))$.
\end{hypothesis*} 

This is stated as Conjecture~1.6.2  (for the plethysm problem) in
\citetalias{GCT6}. 
Here the minimal positive form is not conjectured  to be
reduced as in \citetalias{GCT6}. It can be shown that {\bf PH3} also implies {\bf SH} (a).

The article \citetalias{GCT6} also suggests an
approach  to prove a good polynomial bound on the modular index 
(i.e. {\bf PH3}) based on the theory of nonstandard quantum groups
\citetalias{GCT4,GCT7,GCT8}.
This is also a natural
approach to prove {\bf SH} (a) (and also {\bf PH2}) via {\bf PH3}.

As a preliminary result, it can be verified that
the hypothesis {\bf PH1} in \citetalias{GCT6}, or rather its slightly
strengthened form, implies 
weaker forms of  {\bf PH2} and {\bf SH}; specifically, that
$p(k)$ (and hence s(k)) is $O(2^{\text{poly}(h)})$. But 
this exponential  bound  is very weak and conservative.

The following is a relaxed  version of the Kronecker (decision) problem 
in \citetalias{GCT6}. 

\begin{problem*}[{\bf Relaxed Kronecker Problem}]
Given partitions $\lambda,\mu,\pi$ and a relaxation parameter 
$c > a{h^b}$, for the nonnegative constants 
$a$ and $b$ as in the statement of Hypothesis {\bf SH} above, decide whether 
 $g_{c \lambda, c \mu}^{c \pi}$ is positive.
\end{problem*} 

\begin{theorem*} 
Assuming {\bf PH1} in \citetalias{GCT6} and {\bf SH} here,
the relaxed Kronecker problem can be solved in
$\poly(\bitlength{\lambda},\bitlength{\mu},\bitlength{\pi},\bitlength{c})$
 time,
where $\bitlength{c}$ denotes the bitlength of $c$. In particular,
if the relaxation parameter $c$ is small \footnote {In this statement
it even suffices even if the bitlength of $c$, rather than its value,
is $O(\poly(h))$.},
i.e., $O(\poly(h))$,
then the time is polynomial in the bitlengths of $\lambda$, $\mu$ and $\pi$.

Furthermore, there exists a polynomial time algorithm for deciding
nonvanishing of $g_{\lambda,\mu}^\pi$ that works correctly on almost
all $\lambda,\mu$ and $\pi$; almost all has the same meaning here as in {\bf SH}.
\end{theorem*} 
This  follows by a  slight modification of the proof technique in \citetalias{GCT6}.

The hypotheses {\bf SH}, {\bf PH2} and {\bf PH3} associated with other  decision problems 
in \citetalias{GCT6} can be relaxed in a similar fashion, and analogues of the above Theorem can be proven for the relaxed versions of those decision
problems. {\bf SH} and {\bf PH2} in \citetalias{GCT8} can also be relaxed similarly.

These relaxations do not affect the overall GCT approach to the
fundamental lower bound
problems in complexity theory, such as the $P$ vs. $NP$ problem
in characteristic zero. Because the final goal in GCT is to use the
efficient algorithms for the decision problems
such as the Kronecker problem above,   or rather
the structure of these algorithms,   to show that, for every input size $n$,
there exists an obstruction $O_n$ that serves as a ``proof-certificate'' 
or  ``witness'' of hardness of the explicit function under consideration. 
When saturation only holds in a relaxed sense,
the bit length of the label of this final obstruction $O_n$ 
would get multiplied by a small (polynomial) factor. But this small
blow up does  not affect the overall approach.

The details of this relaxation 
would appear in the revised version of \citetalias{GCT6} under preparation.

\newpage

\begin{acknowledge}
Emmanuel Briand is supported by a Juan de la Cierva fellowship (MICINN, Spain). Mercedes Rosas is supported by a Ram\'on y Cajal fellowship (MICINN, Spain). Both are also supported by Projects MTM2007--64509 (MICINN, Spain) and FQM333 (Junta de Andalucia).
\end{acknowledge}

\bibliography{journals,../kroproduct}

\end{document}

\else




\typeout{Please call latex again.}
\makeatletter\expandafter\@@end
\fi